\documentclass[a4paper]{amsart}
\usepackage{amsfonts}\usepackage{amssymb}\usepackage{amsmath}\usepackage{amsthm}
\usepackage{latexsym}
\usepackage{graphicx}
\usepackage{layout}
\usepackage[T1]{fontenc}
\usepackage[utf8]{inputenc}
\usepackage[colorlinks=true,urlcolor=blue,citecolor=red,linkcolor=blue,linktocpage,pdfpagelabels,bookmarksnumbered,bookmarksopen]{hyperref}

\newtheorem{theorem}{Theorem}[section]

\newtheorem{lemma}[theorem]{Lemma}

\newcommand{\sezione}[1]{\section{#1}\setcounter{equation}{0}}


\def\R{\mathbb{R}}
\def\C{\mathbb{C}}

\def\e{{\varepsilon}}
\def\di12{\mathcal{D}^{1,2}(\R^n)}

\def\l{{\lambda}}

\def\0l{_{0,\l}}
\def\1l{_{1,\l}}
\def\2l{_{2,\l}}
\def\3l{_{3,\l}}
\def\4l{_{4,\l}}

\def\Om{\Omega}

%
%
\def\sideremark#1{\ifvmode\leavevmode\fi\vadjust{\vbox to0pt{\vss
 \hbox to 0pt{\hskip\hsize\hskip1em
 \vbox{\hsize2.1cm\tiny\raggedright\pretolerance10000
  \noindent #1\hfill}\hss}\vbox to15pt{\vfil}\vss}}}%

\newtheorem*{theorem*}{Theorem}

\begin{document}
\title[critical points]{On the number of critical points of solutions of semilinear equations in $\R^2$}
\thanks{This work was supported by Prin-2015KB9WPT, by Universit\'a di Roma "La Sapienza" and partially supported by Indam-Gnampa}
\author[Gladiali]{Francesca Gladiali}
\address{Dipartimento di Chimica e Farmacia, Universit\`a di Sassari, via Piandanna 4 - 07100 Sassari, e-mail: {\sf fgladiali@uniss.it}.}
\author[Grossi]{Massimo Grossi }
\address{Dipartimento di Matematica, Universit\`a di Roma ``La Sapienza", P.le A. Moro 2 - 00185 Roma, e-mail: {\sf massimo.grossi@uniroma1.it}.}

\maketitle
\begin{abstract}
In this paper we construct families of bounded domains $\Om_\e$ and solutions  $u_\e$ of
\[\begin{cases}
-\Delta u_\e=1&\text{ in }\ \Om_\e\\
u_\e=0&\text{ on }\ \partial\Om_\e
\end{cases}\]
such that, for any integer $k\ge2$, $u_\e$ admits at least $k$ maximum points for small enough $\epsilon$. The domain  $\Om_\e$ is ``not far'' to be convex in the sense that it
 is starshaped, the curvature of  $\partial\Om_\e$ vanishes at exactly $two$ points and the minimum of  the curvature of  $\partial\Om_\e$ goes to $0$ as $\e\to0$. 
\end{abstract}

\sezione{Introduction}\label{s0}
The computation of the number and of the nature of critical points of positive solution of the problem
\begin{equation}\label{i0}
\begin{cases}
-\Delta u=f(u)&\text{ in }\ \Om\\
u=0&\text{ on }\ \partial\Om
\end{cases}
\end{equation}
where $\Om\subset\R^n$, $n\ge2$ is a smooth bounded domain and $f$ is a smooth nonlinearity, is a classic and fascinating problem.\\
Many  techniques and important results were developed  in the literature (Morse theory, degree theory, etc.) to address this problem. In these few lines is impossible to mention all these contributions, so we will limit ourselves to recall some of them that are closer to the purpose of this paper.\\
One of the first major results concerns the case $f(u)=\l u$, so $u$ is the first eigenfunction of the Laplacian with zero Dirichlet boundary condition. It was proved by
 Brascamp and Lieb \cite{bl} and  Acker, Payne and Phillippin \cite{app} in dimension $n=2$ that if $\Om\subset\R^n$ is strictly convex then $-\log u$ is convex, so that the superlevel sets are convex and $u$ admits a unique critical point in $\Omega$. Other results on the shape of level sets for various nonlinearities $f$ can be found in \cite{Ko83}, \cite{CS82}, \cite{k}, \cite{Ka86}, \cite{Ga55}, \cite{Ga57}, \cite{Cslin94} and references therein.
 \\
 A second seminal result that we want to mention is the fundamental theorem by Gidas, Ni and Nirenberg \cite{gnn}, which holds in domains which are convex in the direction $x_i$ for any $i=1,..,n$. We have that a domain is convex in the direction $x_1$ (say) if $P=(p_1,x')\in\Omega$ and $Q=(q_1,x')\in\Omega$ then the line segment $\overline{PQ}$ is contained in $\Omega$.
\\
 
\begin{theorem*}[\bf Gidas, Ni, Nirenberg]
 Let $\Om\subset\R^n$ a bounded, smooth domain which is symmetric with respect to the plane $x_i=0$ for any $i=1,..,n$ and  convex in the $x_i$ direction for $i=1,..,n$. Suppose that $u$ is a positive
 solution to \eqref{i0}
where $f$ is a locally Lipschitz nonlinearity. Then 
\begin{itemize}
\item $u$ is symmetric with respect to $x_1,..,x_n$. (Symmetry)
\item  $\frac {\partial u}{\partial x_i}<0$ for $x_i>0$ and $i=1,\dots,n$. (Monotonicity)
\end{itemize}
\end{theorem*}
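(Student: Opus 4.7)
\smallskip

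\noindent\textbf{Proof sketch.} The plan is to apply the moving-plane method of Alexandrov and Serrin. By relabelling it suffices to treat the direction $x_1$. Write $x = (x_1, x')$, set $a := \sup\{x_1 : x \in \Om\}$, and for $\l \in (0, a)$ consider the hyperplane $T_\l = \{x_1 = \l\}$, the cap $\Sigma_\l = \{x \in \Om : x_1 > \l\}$, and the reflection $x^\l = (2\l - x_1, x')$. Convexity of $\Om$ in the $x_1$-direction guarantees $x^\l \in \Om$ whenever $x \in \Sigma_\l$; we may thus define $v_\l(x) := u(x^\l)$ and $w_\l := v_\l - u$ on $\Sigma_\l$. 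Since $f$ is locally Lipschitz and $u$ is bounded, there is a bounded function $c_\l$ with
\begin{equation*}
-\D w_\l = c_\l(x)\, w_\l \quad \text{in } \Sigma_\l,
\end{equation*}
while $w_\l = 0$ on $T_\l \cap \Om$ and $w_\l \geq 0$ on $\de \Sigma_\l \cap \de \Om$ (since $u = 0$ there and $v_\l > 0$ in $\Om$).

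I would first initiate the procedure: when $\l$ is close to $a$ the cap $\Sigma_\l$ has small measure, so the maximum principle for the operator $-\D - c_\l$ on narrow domains yields $w_\l \geq 0$ in $\Sigma_\l$. Define
\begin{equation*}
\l^* := \inf\{\mu \in (0, a) : w_\l \geq 0 \text{ in } \Sigma_\l \text{ for every } \l \in (\mu, a)\};
\end{equation*}
by continuity $w_{\l^*} \geq 0$ in $\Sigma_{\l^*}$. The symmetry and monotonicity conclusions both follow if I can show $\l^* \leq 0$: indeed, the symmetry hypothesis on $\Om$ makes the analogous argument available starting from $x_1 = -a$, and together they force $\l^* = 0$ and $u(-x_1, x') = u(x_1, x')$, while the strict positivity $w_\l > 0$ in each $\Sigma_\l$, $0 < \l < a$, yields $\de u/\de x_1 < 0$ via Hopf's lemma applied on $T_\l$.

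The main obstacle is therefore to exclude $\l^* > 0$. Assume it and apply the strong maximum principle on each connected component of $\Sigma_{\l^*}$: either $w_{\l^*} > 0$ in the interior, or $w_{\l^*} \equiv 0$ there. The vanishing alternative would force $\Om$ to be symmetric about $T_{\l^*}$ near that plane; combined with the given symmetry about $T_0$, composing the two reflections produces a nontrivial translation symmetry of $\Om$, which is impossible for a bounded set. In the strict alternative, Hopf's lemma on $T_{\l^*} \cap \Om$ gives $\de w_{\l^*}/\de x_1 < 0$ there, and a continuation argument --- comparing $w_\l$ on interior compact subsets (where it is bounded below by a positive constant) with $w_\l$ on a thin strip near $\de \Sigma_\l$ (where the small-measure maximum principle still applies to $-\D - c_\l$) --- produces $\d > 0$ with $w_{\l^* - \d} \geq 0$ in $\Sigma_{\l^* - \d}$, contradicting the minimality of $\l^*$. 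This quantitative continuation step is the most delicate technical point; all other ingredients are standard consequences of the weak and strong maximum principles together with Hopf's lemma.
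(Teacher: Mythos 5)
This theorem is quoted in the paper without proof (it is the classical Gidas--Ni--Nirenberg result, cited from \cite{gnn}), so there is no in-paper argument to compare against; what you have written is the standard moving-plane proof in the Berestycki--Nirenberg small-measure formulation, and its skeleton is sound. There is, however, a gap in your treatment of the vanishing alternative at $\l^{*}>0$. From $w_{\l^{*}}\equiv 0$ in $\Sigma_{\l^{*}}$ you only get $u(x)=u(x^{\l^{*}})$ on the cap, hence that the reflection about $T_{\l^{*}}$ of $\partial\Om\cap\overline{\Sigma_{\l^{*}}}$ lies in $\partial\Om$; as you yourself write, this is a statement ``near that plane'' and does \emph{not} yield global reflection symmetry of $\Om$ about $T_{\l^{*}}$. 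Composing a local symmetry with the global one about $T_{0}$ does not produce a translation symmetry of $\Om$, so the ``a bounded set cannot be periodic'' step has nothing to act on, and that branch of the dichotomy is not actually closed.

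In fact the vanishing case is ruled out more directly and without any continuation. If $\l^{*}>0$, pick $x_{0}=(x_{0,1},x_{0}')\in\partial\Om$ with $x_{0,1}>\l^{*}$. Symmetry of $\Om$ about $T_{0}$ puts $(-x_{0,1},x_{0}')\in\partial\Om$, and $x_{1}$-convexity puts the open segment joining these two boundary points inside $\Om$; since $-x_{0,1}<2\l^{*}-x_{0,1}<x_{0,1}$, the reflected point $x_{0}^{\l^{*}}$ lies in the interior of $\Om$, so $w_{\l^{*}}(x_{0})=u(x_{0}^{\l^{*}})-u(x_{0})=u(x_{0}^{\l^{*}})>0$. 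Thus $w_{\l^{*}}$ is strictly positive somewhere on $\partial\Sigma_{\l^{*}}\cap\partial\Om$, hence (by continuity and the strong maximum principle) $w_{\l^{*}}>0$ throughout $\Sigma_{\l^{*}}$. From there your Hopf lemma plus the compact-interior/thin-strip continuation is the right way to move past $\l^{*}$. The remaining ingredients --- the bounded zeroth-order coefficient $c_{\l}$ coming from local Lipschitz continuity of $f$ and boundedness of $u$, the start near $x_{1}=a$ by the small-measure maximum principle, the two-sided argument giving $\l^{*}=0$ and full symmetry, and Hopf's lemma on $T_{\l}$ for the strict monotonicity --- are all correct and standard.
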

\noindent An easy consequence of the symmetry and monotonicity properties in the previous theorem is that \[
 \sum_{i=1}^nx_i\frac{\partial u}{\partial x_i}<0 \ \ \forall x\ne0\] that is all the superlevel sets are $starshaped$ with respect to the origin.\\
This theorem holds in symmetric domains. Although it is expected that the uniqueness of the critical point (as well as the starlikeness of superlevel sets) holds in more general convex domains, this is a very difficult hypothesis to remove. \\
Next we mention another important result which holds for a wide class of nonlinearities $f$ without the symmetry assumption on $\Om$ and for semi-stable solutions. To this end we recall that a solution $u$ to \eqref{i0} is semi-stable if the linearized operator at $u$ admits a nonnegative first eigenvalue.
\begin{theorem*}[\bf Cabr\'e, Chanillo \cite{cc}]
 Assume $\Omega$ is a smooth, bounded and convex domain of $\R^2$ whose boundary has positive curvature. Suppose $f\ge0$ and $u$ is a semi-stable positive solution to \eqref{i0}. 
Then $u$ has a unique critical point, which is non-degenerate.
\end{theorem*}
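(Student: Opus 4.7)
The strategy I would follow combines a Poincar\'e--Hopf index count for the vector field $\nabla u$ with the semi-stability hypothesis; the positive curvature of $\partial\Omega$ would enter only through a boundary term that appears after integration by parts. As a first step, since $f\ge 0$ the solution $u$ is superharmonic, so by the strong maximum principle $u>0$ in $\Omega$ and by Hopf's lemma $\partial_\nu u<0$ on $\partial\Omega$. In particular every critical point of $u$ is interior, and an interior critical point cannot be a local minimum. Non-degeneracy of critical points would follow from semi-stability: any directional derivative $v=e\cdot\nabla u$ satisfies the linearized equation $-\Delta v=f'(u)v$ pointwise, so a degenerate critical point would produce a nontrivial element in the kernel of $L:=-\Delta-f'(u)$ whose sign near $\partial\Omega$, controlled by $\partial_\nu u<0$, turns out to be incompatible with $\lambda_1(L)\ge 0$.

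Second, I would apply the Poincar\'e--Hopf theorem to $-\nabla u$ (which points strictly outward on $\partial\Omega$): since $\chi(\Omega)=1$ and every non-degenerate maximum (resp.\ saddle) contributes index $+1$ (resp.\ $-1$), one obtains
\[
M-S=1,
\]
where $M$ and $S$ denote respectively the number of local maxima and saddle points of $u$. Uniqueness of the critical point thus reduces to the statement $S=0$.

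Third, assume for contradiction that a saddle $p\in\Omega$ exists. I would test the stability inequality
\[
\int_\Omega\bigl(|\nabla\varphi|^2-f'(u)\,\varphi^2\bigr)\,dx\ge 0\qquad\forall\,\varphi\in C^\infty_0(\Omega)
\]
against $\varphi=\eta\,|\nabla u|$ with a suitable cut-off $\eta$ that allows one to approach the boundary. A Bochner--Sternberg--Zumbrun type identity for $|\nabla u|$ transforms the interior contribution into a quantity governed by the second fundamental form of the level sets of $u$, which at a saddle must be strictly negative, while the boundary contribution takes the form $-\int_{\partial\Omega}\kappa\,|\nabla u|^2\,d\sigma$ and is non-positive because $\kappa>0$. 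Summing both contributions, the quadratic form would become strictly negative, contradicting semi-stability. The main obstacle is precisely this last step: the interior identity delivers the correct sign only after a careful analysis of the level-set geometry near $p$ and of the cut-off, and the positive curvature assumption is essential to rule out the scenario in which a favorable boundary contribution compensates for the negative interior term produced by the saddle.
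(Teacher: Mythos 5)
This theorem is quoted in the paper as a background result of Cabr\'e and Chanillo and is not proved there, so there is no in-paper proof to compare against; I will assess your proposal against the argument in their original work.

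Your preliminary observations (superharmonicity, $\partial_\nu u<0$ by Hopf, the Poincar\'e--Hopf count $M-S=1$ for $-\nabla u$, and the reduction to showing $S=0$) are all sound, and they do echo part of the Cabr\'e--Chanillo strategy. The genuine gap is in your third step. Testing the stability inequality with $\varphi=\eta|\nabla u|$ and running the Bochner computation does \emph{not} produce a term that is ``strictly negative at a saddle.'' In two dimensions the Sternberg--Zumbrun identity gives, for a semi-stable solution and $\eta\in C_0^\infty(\Omega)$,
\[
\int_\Omega |\nabla u|^2|\nabla\eta|^2\,dx\;\ge\;\int_{\{|\nabla u|>0\}}\Bigl(\kappa_L^2\,|\nabla u|^2+\bigl(\partial_T|\nabla u|\bigr)^2\Bigr)\eta^2\,dx,
\]
where $\kappa_L$ is the curvature of the level line of $u$ and $\partial_T$ the tangential derivative along it. The right-hand side is a sum of squares and is therefore nonnegative; a saddle gives no distinguished negative contribution. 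Moreover, near a saddle the integrand remains locally integrable (in the model $u=x^2-y^2$, $\kappa_L^2|\nabla u|^2$ stays bounded because the curvature blow-up is compensated by $|\nabla u|^2\to0$), so no divergence or sign obstruction appears either. The boundary term you write is also the one that keeps the final inequality \emph{consistent} under the curvature assumption, not the one that contradicts it. In short, this route measures the geometry of level lines weighted by $|\nabla u|^2$, a quantity that is blind precisely where $\nabla u=0$, so it cannot count or exclude interior critical points. You yourself flag this step as the ``main obstacle,'' and indeed it is where the argument breaks down.

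What Cabr\'e--Chanillo actually do is work with the \emph{directional derivatives} $w_e=e\cdot\nabla u$, which solve the linearized equation $\Delta w_e+f'(u)w_e=0$ in $\Omega$ and satisfy $w_e=(e\cdot\nu)\,\partial_\nu u$ on $\partial\Omega$. Since $\partial_\nu u<0$ and $\partial\Omega$ is strictly convex, $e\cdot\nu$ changes sign at exactly two boundary points, so $w_e$ vanishes at exactly two points of $\partial\Omega$ and is positive on one boundary arc, negative on the other. Semi-stability (nonnegative first Dirichlet eigenvalue of $-\Delta-f'(u)$) then forces $w_e$ to have exactly one positive and one negative nodal domain: any additional nodal domain would be compactly contained in $\Omega$ and would furnish a Dirichlet test function showing $\lambda_1\le0$ with a sign-changing associated function, which is impossible. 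Running this for all directions $e$ and tracking how the single nodal line of $w_e$ rotates with $e$ yields both uniqueness and non-degeneracy of the critical point (a degenerate zero of some $w_e$ would create at least four nodal sectors, hence at least three nodal domains). This is where the positive boundary curvature and the semi-stability are actually used, and neither the Bochner identity for $|\nabla u|$ nor a cut-off analysis near $\partial\Omega$ appears.
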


As a consequence the superlevel sets of $u$ are strictly convex in a neighborhood of the critical point and in a neighborhood of the boundary. It is thought that they are all convex, but this is certainly not true for suitable nonlinearities like  in the following surprising result:

 \begin{theorem*}[\bf Hamel, Nadirashvili, Sire \cite{hns}]
 In dimension $n=2$ there are some smooth bounded convex domains $\Omega$ and some $C^{\infty}$ functions $f:[0,+\infty)\to \R$ for which problem \eqref{i0} admits a solution $u$ which is not quasiconcave.
 \end{theorem*}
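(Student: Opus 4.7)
The aim is to produce a strictly convex bounded domain $\Om\subset\R^2$, a $C^\infty$ nonlinearity $f:[0,+\infty)\to\R$, and a positive smooth solution $u$ of \eqref{i0} with at least two strict local maxima at the same height. Non-quasiconcavity of such a $u$ is automatic: if $M$ denotes the common maximum value, then for $c<M$ close enough to $M$ the superlevel set $\{u>c\}$ has two disjoint connected components, and a disconnected set cannot be convex. The task therefore reduces to producing a two-peak positive solution on a convex domain, for some convenient choice of $f$.

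The natural approach is a singular perturbation / Lyapunov--Schmidt argument. Fix $\Om$ as a strictly convex domain symmetric with respect to both coordinate axes and strongly elongated in the $x_1$-direction, say
\[
\Om=\Big\{x\in\R^2:\frac{x_1^2}{L^2}+x_2^2<1\Big\}
\]
with $L\gg1$. Pick a family $\{f_\l\}_{\l>0}$ of nonlinearities for which $-\Delta U=f_\l(U)$ in $\R^2$ admits a well-known one-parameter family of radial profiles $U_\l$ concentrating at the origin as $\l\to 0$: typical choices are $f_\l(u)=\l^2 u^p$ in the singular-perturbation regime, or $f_\l(u)=\l e^u$ in the Liouville regime. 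Working in the class of functions invariant under $x_1\mapsto -x_1$ and $x_2\mapsto -x_2$, seek $u_\l$ of the form
\[
u_\l(x)=U_\l(x-p^\l)+U_\l(x+p^\l)+\phi_\l(x),\qquad p^\l=(p_\l,0),
\]
with $\phi_\l$ a small correction. A finite-dimensional reduction determines $\phi_\l$ as a function of the location $p_\l$ and reduces the PDE to a scalar bifurcation equation for $p_\l$, whose leading term is the derivative along the $x_1$-axis of a geometric functional built from the Green or Robin function of $\Om$. For $L$ large enough this reduced equation admits a non-degenerate zero $p_\l\in(0,L)$, and the implicit function theorem then yields an exact solution $u_\l$ of \eqref{i0} with $f=f_\l$, exhibiting two strict local maxima near $\pm p^\l$ of equal height.

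The main difficulty is carrying out the reduction on a strictly convex $\Om$: on such domains the classical reduced functionals (Robin function, and analogues in the Liouville case) typically have a unique critical point, which is precisely why multi-peak solutions are usually built on dumbbell-shaped or otherwise non-convex geometries. The remedy is twofold. First, imposing the two axial symmetries from the outset collapses the reduction to a one-dimensional bifurcation in the single parameter $p_\l\in(0,L)$, killing the unstable directions that would otherwise destroy the two-peak configuration. Second, taking $L$ sufficiently large forces the reduced one-dimensional functional along the $x_1$-axis to develop a non-trivial critical point, corresponding heuristically to two bubbles that repel each other away from the centre without reaching $\partial\Om$. Once this balance is achieved, the fact that $u_\l$ solves \eqref{i0} exactly guarantees that $-\Delta u_\l=f_\l(u_\l)$ is automatically a smooth function of $u_\l$ alone, and the superlevel sets just below $\max u_\l$ split into two disjoint small disks, yielding the desired non-quasiconcave solution on a convex domain.
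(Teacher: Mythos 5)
The theorem you were asked to justify is a \emph{cited} result; the present paper does not prove it. More importantly, your proposed construction cannot work, for a reason that the paper itself points out just below the theorem statement.

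You aim to build a solution with two strict local maxima on an ellipse that is strictly convex and symmetric with respect to both coordinate axes. But such a domain satisfies exactly the hypotheses of the Gidas--Ni--Nirenberg theorem quoted earlier in the paper, and your candidate nonlinearities ($\lambda^2 u^p$, $\lambda e^u$) are $C^\infty$, hence locally Lipschitz. GNN then forces \emph{every} positive solution of $-\Delta u = f(u)$, $u=0$ on $\partial\Omega$, to be symmetric in $x_1, x_2$ and strictly monotone along each half-axis, hence to have a \emph{unique} critical point at the origin. No two-peak solution exists, no matter how elongated the ellipse, how clever the reduced functional, or how carefully the symmetry constraints are imposed: imposing the two reflection symmetries on the ansatz does not let you escape a theorem that applies to \emph{all} positive solutions. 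The Lyapunov--Schmidt reduction you sketch would therefore have to fail at some stage --- concretely, the reduced equation for $p_\lambda$ cannot have a nondegenerate zero in $(0,L)$, since an exact two-peak solution would contradict GNN. The difficulty you acknowledge (``on convex domains the reduced functionals typically have a unique critical point'') is not a technical obstacle to be finessed; it is the shadow of GNN and is insurmountable in this geometry.

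The actual Hamel--Nadirashvili--Sire construction does something quite different and is consistent with GNN: it produces a solution with a \emph{unique} maximum whose superlevel sets remain connected and starshaped (as the paper notes immediately after the statement) but fail to be \emph{convex} at intermediate levels --- they develop a ``waist''. The mechanism, as the paper also hints, is a perturbative construction around the one-dimensional strip profile $\frac12 - \frac{y^2}{2}$ in a long convex domain, with a carefully chosen nonlinearity $f$ making the middle of the domain slightly ``thinner'' at intermediate heights. Non-quasiconcavity is detected through a non-convex (but still connected) level set, not through a disconnected one. If you want to prove the HNS theorem, the right target is a level set with negative curvature somewhere, not a second peak.
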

 We recall that a function is called quasiconcave if its superlevel sets are all convex. We can then conclude that the convexity of the domain is not  always preserved by the superlevel sets. Nevertheless by the Gidas, Ni, Nirenberg theorem, being the domain $\Omega$ in \cite{hns} symmetric, the superlevel sets in this example are still starshaped  and the maximum point of the solution is unique.\\

\noindent The previous results suggest the following questions:
\vskip0.2cm
\noindent {\bf Question 1}: {\em Assume $\Omega$ is starshaped. Are the superlevel sets of any positive solution to \eqref{i0} starshaped?}
\vskip0.2cm

\noindent {\bf Question 2}: {\em Assume that $u$ is a positive solution to \eqref{i0} 
in a smooth bounded domain $\Om\subset\R^2$ whose curvature is negative somewhere. What about the number of critical points of $u$?}
\vskip0.2cm
\noindent Of course interesting examples deal with contractible domains $\Omega$, otherwise it is not difficult to construct examples of solution $u$ to \eqref{i0} with many critical points.
Some results in the direction to prove Question 1 were obtained for non-symmetric domains, in a perturbative setting, by Grossi and Molle \cite{gm} and Gladiali and Grossi \cite{gg1,gg2}.\\
In this paper we answer Question 1 showing that the starlikeness of the domain is not maintained by the superlevel sets.
Moreover we consider also Question 2 showing that in general there is no bound on the number of critical points. \\
Of course this last result is very sensitive to the shape of $\Om$. In a recent paper \cite{lr} it was showed that if $\partial\Om$ is contained in $\left\{z\in\C:|z|^2=f(z)+\overline{f(z)}\right\}$ where $f(z)$ is a rational function, then, differently than our case, there is a  bound on the number of the critical points. We refer to \cite{am} for other results in these direction.
\\
Actually we will construct  a family of domains $\Om_\e$
starshaped with respect to an interior point and solutions $u_\e$ of the classical torsion problem, namely 
\begin{equation}\label{eq:torsion}
\begin{cases}
-\Delta u=1&\text{ in } \ \Om\\
u=0&\text{ on }\ \partial\Om
\end{cases}
\end{equation}
with an arbitrary large number of  maxima and of disjoint superlevel sets. 
Moreover the curvature of $\partial\Om_\e$ vanishes at exactly two points and its minimum value goes to $0$ as $\e\to0$.
In some sense our domains $\Om_\e$  are not ``far'' to be convex.
More precisely our result is the following,
\begin{theorem}\label{i1}
For any integer  $k\geq 2$ there exists a  family of smooth bounded domains $\Om_{\e,k}\subset\R^2$ and smooth functions $u_{\e,k}:\Om_{\e,k}\to\R^+$ which solves the torsion problem \eqref{eq:torsion} in $\Omega_{\e,k}$, such that for $\e$ small enough,
\begin{itemize}
\item $\Om_{\e,k}$ is  starshaped with respect to an interior point. \hfill $(P0)$
\item The set $u_{\e,k,}$ $\{u_{\e,k}>c\}$ is non-empty and has at least
$k$ connected components; in particular $u_{\e,k}$ has at least $k$ maximum points.\hfill $(P1)$ 
\item If $S$ is the strip $S=\{(x,y)\in\R^2\hbox{ such that }|y|<1\}$ 
and $Q$ is any compact set of $\R^2$ then \  $\Om_{\e,k}\cap Q\xrightarrow[\e\to0]\ S\cap Q$.\hskip5cm(P2)
\item The curvature of $\partial\Om_{\e,k}$ changes sign and vanishes exactly at two points. Moreover  $\min\Big(Curv_{\partial\Om_{\e,k}}\Big)\xrightarrow[\e\to0]\ 0$
.\hfill $(P3)$
\end{itemize}
\end{theorem}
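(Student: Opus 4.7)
The plan is to construct $\Om_{\e,k}$ as a long, nearly strip-like domain whose torsion solution has $k$ distinct local maxima, exploiting the degeneracy of the strip problem: the infinite-strip torsion solution $u_S(x,y)=\tfrac12(1-y^2)$ has the whole line $\{y=0\}$ as its maximum set, so a small carefully chosen perturbation of a bounded strip-approximation can split this degenerate ridge into $k$ isolated peaks while only mildly modifying the boundary geometry.

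As a starting point I would fix a one-parameter family of smooth bounded domains satisfying $(P2)$, for instance
\[
\Om_{\e,k}=\{(x,y):\ |x|<L_\e,\ |y|<h_\e(x)\}\quad\text{(smoothly capped at the ends),}
\]
with $L_\e\to\infty$ and $h_\e(x)\to 1$ on compact sets, and use standard elliptic estimates to conclude $u_\e\to u_S$ in $C^1_{\mathrm{loc}}(S)$. The heuristic for generating $k$ maxima is the adiabatic ansatz $u_\e(x,y)\approx\tfrac12(h_\e(x)^2-y^2)$, valid when $h_\e$ varies slowly, which on $\{y=0\}$ inherits the local maxima of $h_\e^2$. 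Rigorously I would expand $u_\e=u_S+\d_\e w_\e+o(\d_\e)$, with $\d_\e$ the amplitude of the boundary perturbation, where $w_\e$ solves a linear Poisson problem on $S$ whose Dirichlet data is induced by the perturbation; a maximum-principle argument applied to $\partial_x u_\e$ on the narrow connecting regions between consecutive peaks would then disconnect the superlevel sets into $k$ components, yielding $(P1)$. Property $(P0)$ follows by choosing the perturbation small enough and taking the end caps as arcs of large circles centred on the horizontal axis, so that every ray from the origin meets $\partial\Om_{\e,k}$ exactly once.

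The main obstacle is property $(P3)$. Any profile $h_\e$ with $k\ge 2$ strict local maxima forces its graph to have at least $2k-2$ inflection points, so the $k$ maxima of $u_\e$ cannot come from a straightforward $k$-bump perturbation of the width: the signed curvature of $\partial\Om_{\e,k}$ would vanish at too many points. Reconciling $(P1)$ with $(P3)$ is the heart of the proof, and requires constructing a boundary that is essentially convex --- with a single connected arc of slightly negative curvature providing exactly the two required zeros --- yet whose interior geometry still produces $k$ local maxima of $u_\e$. I expect the right route is to introduce two very different scales: a macroscopic convex shape with one controlled concave deviation, superimposed with a subtler interior feature (for example an asymmetric placement of the top and bottom boundaries, or mutually cancelling corrections on opposite sides of the strip that cancel in curvature but reinforce in width) which does not contribute any inflection point but still breaks the translation degeneracy of the limiting flat ridge. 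The central technical calculation is the quantitative compatibility of the two scales, most likely carried out by a Lyapunov--Schmidt type reduction or by matched asymptotics of $u_\e$ along the horizontal axis combined with the signed-curvature expansion of $\partial\Om_{\e,k}$; the estimate $\min\mathrm{Curv}(\partial\Om_{\e,k})=o(1)$ as $\e\to0$ then follows from the scaling of the concave deviation with $\e$.
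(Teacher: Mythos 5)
Your proposal takes the domain-first route: prescribe a family of domains $\Om_{\e,k}$ with the desired geometry, solve the torsion problem on them, and then analyse the solution asymptotically. The paper does the opposite. It writes down an explicit function
\[
u_{\e,k}(x,y)=\tfrac12-\tfrac12 y^2+\e\,(y^3-3x^2y)+\e^{3/2}\,\mathrm{Re}\Big(-\textstyle\prod_{i=1}^{2k}(z-x_i)\Big),
\]
which satisfies $-\Delta u_{\e,k}=1$ automatically because the correction terms are harmonic, and then \emph{defines} $\Om_{\e,k}$ as a connected component of $\{u_{\e,k}>0\}$. All of $(P0)$--$(P3)$ are then verified by direct computation on a closed-form expression; no elliptic estimate, no Lyapunov--Schmidt reduction, no matched asymptotics are needed. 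This is a genuinely different strategy, and the difference is not cosmetic: it is exactly what lets the paper resolve the tension you yourself identify between $(P1)$ and $(P3)$.

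That tension is where your proposal has a real gap. You correctly observe that producing the $k$ maxima from a width profile $h_\e$ with $k$ bumps forces $\ge 2k-2$ inflection points on $\partial\Om_{\e,k}$, which is incompatible with $(P3)$. You then gesture at a fix (``two very different scales,'' ``mutually cancelling corrections on opposite sides that cancel in curvature but reinforce in width'') without a concrete construction or any calculation showing the two requirements are simultaneously achievable. As stated the plan fails precisely here: the adiabatic ansatz $u_\e\approx\tfrac12(h_\e(x)^2-y^2)$ rigidly ties the maxima of $u_\e$ along $\{y=0\}$ to the maxima of $h_\e$, and nothing in your sketch decouples them. The paper's construction does decouple them: the $\e^{3/2}$ harmonic polynomial creates $k$ interior bumps of $u_{\e,k}$ along the real axis, while the harmonic cubic $\e\,(y^3-3x^2y)$ breaks the $y\mapsto -y$ symmetry and dominates the signed curvature of $\partial\Om_{\e,k}$ near the ends; Lemma \ref{lem:curv-2} shows the only curvature zeros sit near $y=-1$ at $|x|\sim\e^{-1/(4k-4)}$, where the polynomial term is still subdominant. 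The exponent $3/2$ (any $\a\in(1,2)$ works, $\a=2$ does not) is tuned so that the $v_k$ term controls the topology of the superlevel sets without ever entering the curvature at leading order. That quantitative separation of roles is exactly the missing ingredient your ``two scales'' remark was reaching for; without it, or an equivalent mechanism, $(P1)$ and $(P3)$ cannot both be established and the proof is incomplete.
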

A picture of $\Om_{\e,2}$ for $\e$ small is given in Fig.1.
\begin{figure}[h]
\centering
\includegraphics[scale=0.15]{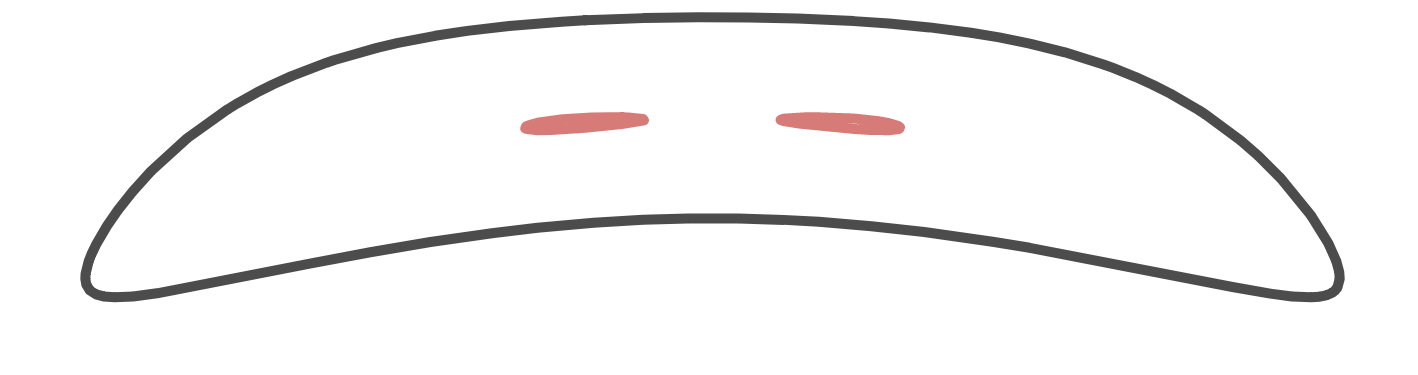}
\caption{Domain $\Om_{\e,2}$ with  level set $\{u_{\e,2}=c\}$}
\end{figure}
Of course $(P2)$ implies that the superlevel set $\{u_{\e,k}>c\}$ is not starshaped.
We recall that every solution to \eqref{eq:torsion} is positive by the Maximum principle and semi-stable as in \cite{cc}.
We point out that the solution $u_{\e,k}$ will be explicitly provided and the domain $\Om_{\e,k}$ will be the superlevel set $\{u_{\e,k}>0\}$.\\
In some sense our result shows that the assumption on the positivity of the curvature of $\partial\Om$ in
 Cabr\'e and Chanillo's Theorem cannot be relaxed because it is enough that the curvature of $\partial\Om_{\e,k}$ satisfies $(P3)$ to imply that there exists a semi-stable solution of a (simple) PDE with an arbitrary number of critical points. By $(P2)$ our domain is 'locally''' close to a strip and $u_{\e,k}\xrightarrow[\e\to0]\ \frac12-\frac{y^2}2$ in $C^2_{loc}(\R^2)$. Note that the function $\frac12-\frac{y^2}2$, which solves $-\Delta u=1$ in the strip $S$, was also used in Hamel, Nadirashvili and Sire \cite{hns}.
We point out that when $\e$ is small enough, the domain $\Omega_{\e,k}$ in Theorem \ref{i0} looks like  the one in  \cite{hns} even if it has negative curvature somewhere.

Before describing the construction of the solution $u_{\e,k}$ let us make some remarks on  $(P2)$. It proves that the starlikeness of $\Om_{\e,k}$ is not enough to guarantee that the superlevel sets are starshaped proving Question 1. To our knowledge this is the first example with this property. 
Theorem \eqref{i1} also shows that it cannot exist a starshaped rearrangement which associates to a smooth function $u$ another function $u^*$ with starshaped superlevel sets verifying  the standard properties of rearrangements, i.e.
\begin{equation}\label{i2}
\int_{\Om^*}|u^*|^p= \int_\Om|u|^p\quad\forall p\ge1\qquad\hbox{ and }\qquad\int_{\Om^*}|\nabla u^*|^2\le \int_\Om|\nabla u|^2.
\end{equation}
A starshaped rearrangement which verifies, under additional assumptions, properties \eqref{i2} was introduced by Kawohl in \cite{k1} and \cite{k}. This implies that, jointly with \eqref{i2},
\begin{equation}\label{i99}
\inf\limits_{u\in H^1_0(\Om)}\frac12\int_\Om|\nabla u|^2-\int_\Om u
\end{equation}
is achieved at a unique function $u$ with starshaped superlevel sets. However this type of rearrangement does not always exist, it is depending on the shape of $\Om$.
An example (see \cite{k} and \cite{g}) is the so-called {\em Grabmüller's long nose''} \cite{g}. Since our pair $(\Om_{\e,k},u_{\e,k})$ satisfies \eqref{i99} and $u_{\e,k}$ has some superlevel sets which are not starshaped, the requested starshaped rearrangement cannot exists for $\Om_{\e,k}$.\\
Finally we remark that in Makar-Limanov \cite{ml} it was proved that if $\Omega$ is a smooth bounded strictly convex domain of $\R^2$ and $u$ solves the torsion problem in $\Omega$ then the superlevel sets are strictly convex too. It seems then that the torsion problem is a {\em``good'' } problem in which the properties of $\Omega$ are maintained by the superlevel sets. It is then even more unexpected that this does not hold for the starlikeness.

Next we say some words about the construction of $u_{\e,k}$. The starting point is given by the function
$$\phi(y)=\frac 12-\frac12y^2$$
which solves
\begin{equation}
\begin{cases}
-\Delta\phi=1&\text{ in }\ |y|<1\\
u=0&\text{ on }\ y=\pm1.
\end{cases}
\end{equation}
Our function $u_{\e,k}$ is a perturbation of $\phi$ with suitable $harmonic$ functions. The choice of the harmonic functions is quite delicate: let us consider the holomorphic function $F_k:\mathbb{C}\to\mathbb{C}$,
\begin{equation}\label{eq:F}
F_k(z)=-\Pi_{i=1}^{2k}(z-x_i)
\end{equation}
for arbitrary real numbers $x_1<x_2<..<x_{2k}$ and define
$$v_k(x,y)=Re\Big(F_k(z)\Big).$$
Next we define  $u_{\e,k}$  as
$$u_{\e,k}(x,y)=\frac 12-\frac12y^2+\e(y^3-3yx^2)+\e^\frac32v_k(x,y).$$
We have trivially that $-\Delta u_{\e,k}=1$ and the proof of Theorem \ref{i1} reduces to show that for $\e$
small enough the set $\Om_{\e,k}=\{u_{\e,k}>0\}$ is a bounded smooth domain which verifies $(P0)-(P3)$. Although the function $u_{\e,k}$ is explicitly  provided, the proof of  Theorem \ref{i1} involves delicate computations. Note that the power $\frac32$ appearing in the definition of $u_{\e,k}$ can be replaced with any real number $\alpha\in(1,2)$. However $\alpha=2$ is not allowed for technical reasons (``bad'' interactions occur).\\
There is a flexibility in the choice of the holomorphic function $F_k$; indeed it can be replaced by another one such that the restriction to the real line has $k$ maxima points and verifies some suitable growth condition at $\pm\infty$.  \\

Theorem \ref{i0} can be extended to semi-stable solutions of more general nonlinear problems. Let us  consider a solution $u$  to
\begin{equation}\label{f1}
\begin{cases}
-\Delta u=\lambda f(u)&\hbox{in }\Om\\
u>0&\hbox{in }\Om\\
u=0&\hbox{on }\partial\Om
\end{cases}
\end{equation}
where $\Om\subset\R^2$ is a bounded smooth domain, $f:\R^+\to\R$ is a smooth nonlinearity (say $C^1$) with $f(0)>0$ and $u_\l$ is a family of solutions of \eqref{f1} satisfying
\begin{equation}\label{f2}
||u_\l||_\infty\le C\quad\hbox{for $\l$ small},
\end{equation}
with $C$ independent of $\l$. 
A classical example of solutions satisfying \eqref{f1} and \eqref{f2} was given by Mignot and Puel \cite{mp} when $f$ is a positive, increasing and convex nonlinearity and $0<\l<\l^*$. 
See \cite[Theorem 10]{gg1} for some results about convexity and uniqueness of the critical point to solutions to \eqref{f1}.
Then we have the following result,
\begin{theorem}\label{i3}
Let $\e>0$, $k\ge 2$ and $\Omega_{\e,k}$ be as in Theorem \ref{i0}. Then there exists $\bar \l$ (depending on $\e$) such that if $u_{\l,\e,k}$
is a solution to \eqref{f1} in $\Omega_{\e,k}$ that satisfies \eqref{f2}, we have that, for any $0<\l<\bar \l$, $u_{\l,\e,k}$
 is semi-stable and satisfies  $(P1)$-$(P3)$.
\end{theorem}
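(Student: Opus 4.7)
The plan is to view Theorem \ref{i3} as a small-$\l$ perturbation of Theorem \ref{i1}: the domain $\Omega_{\e,k}$ is identical, so $(P2)$ and $(P3)$ are inherited verbatim, and all the content is concentrated in $(P1)$ and in semi-stability.

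\textbf{Rescaling and $C^{2,\alpha}$-limit.} Because $\|u_{\l,\e,k}\|_\infty\le C$ and $f\in C^1$, the right-hand side of \eqref{f1} is bounded by $C'\l$. Standard $L^p$- and Schauder estimates applied to $-\Delta u_{\l,\e,k}=\l f(u_{\l,\e,k})$ with zero Dirichlet data give $\|u_{\l,\e,k}\|_{C^{1,\alpha}(\overline{\Omega_{\e,k}})}\le C''\l$, so $u_{\l,\e,k}\to 0$ uniformly as $\l\to 0$. Setting $v_\l:=u_{\l,\e,k}/\l$, the equation becomes
\[
-\Delta v_\l = f(u_{\l,\e,k})\quad\hbox{in }\Omega_{\e,k},\qquad v_\l=0\quad\hbox{on }\partial\Omega_{\e,k},
\]
with right-hand side converging uniformly to $f(0)$. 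Schauder estimates now yield $v_\l\to v_0$ in $C^{2,\alpha}(\overline{\Omega_{\e,k}})$, and uniqueness for the torsion problem forces $v_0=f(0)\,u_{\e,k}$, with $u_{\e,k}$ the explicit solution provided by Theorem \ref{i1}.

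\textbf{Transfer of the multiple maxima (the main obstacle).} By Theorem \ref{i1} there is a level $c_0>0$ such that $\{u_{\e,k}>c_0\}$ has at least $k$ connected components, each compactly contained in $\Omega_{\e,k}$. Since the number of superlevel components is monotone non-decreasing as the level rises, and since Sard's theorem supplies a dense set of regular values, we may pick a regular value $c\in(c_0,\min_i m_i)$ (where the $m_i$ denote the $k$ maximum values of $u_{\e,k}$) for which $\{u_{\e,k}>c\}$ still has at least $k$ connected components and $\{u_{\e,k}=c\}$ is a smooth compact $1$-manifold inside $\Omega_{\e,k}$. The $C^{2,\alpha}$ convergence $v_\l\to f(0)u_{\e,k}$ renders $\{v_\l=f(0)c\}$ a $C^1$-small transverse perturbation of this manifold for $\l$ small, so $\{v_\l>f(0)c\}$ has at least $k$ connected components, each relatively compact in $\Omega_{\e,k}$. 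Each such component carries an interior maximum of $v_\l$, and therefore of $u_{\l,\e,k}=\l v_\l$, which gives $(P1)$. This is the delicate step: since Theorem \ref{i1} does not assert non-degeneracy of the maxima of $u_{\e,k}$, the implicit function theorem cannot be invoked directly on $\nabla u_{\e,k}=0$, and the argument must be conducted on the superlevel sets via a carefully chosen regular value $c$, exploiting $C^1$-stability of the topology of $\{v_\l>f(0)c\}$ under small perturbations.

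\textbf{Semi-stability.} The linearization along $u_{\l,\e,k}$ is $L_\l=-\Delta-\l f'(u_{\l,\e,k})$. By the uniform smallness of $u_{\l,\e,k}$, the potential $\l f'(u_{\l,\e,k})$ tends to $0$ uniformly as $\l\to 0$, and the min--max characterization gives
\[
\mu_1(L_\l)\ \ge\ \mu_1(-\Delta,\Omega_{\e,k})-C\l,
\]
which is strictly positive for $\l$ small. Hence $u_{\l,\e,k}$ is strictly stable, in particular semi-stable, completing the proof.
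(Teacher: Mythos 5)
Your proposal follows the paper's own approach: rescale by $\l$, show $u_{\l,\e,k}/\l$ converges in $C^2(\overline{\Omega_{\e,k}})$ to the torsion solution $f(0)u_{\e,k}$ (the paper's Lemma \ref{lem:convergence}, proved via Green's representation rather than directly via $L^p$/Schauder, but to the same effect), and then transfer $(P1)$--$(P3)$ and semi-stability to $u_{\l,\e,k}$ by perturbation. You spell out the transfer of $(P1)$ more carefully than the paper does --- choosing a Sard regular value so that the superlevel-set topology is $C^1$-stable, thus avoiding any non-degeneracy assumption on the maxima of $u_{\e,k}$ --- but the overall strategy is the same.
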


\section{The holomorphic function $F(z)$}
Here and in the next sections, to simplify the notation we omit the index $k$ when we define the functions $v(x,y)$, $u_\e(x,y)$ and the domains $\Omega_\e$.\\
For $k\ge2$ let us consider arbitrary real numbers
$x_1<x_2<..<x_{2k}$ and the holomorphic function $F:\C\to\C$ in \eqref{eq:F} given by
\begin{equation}\label{b1}
F(z)=-\Pi_{i=1}^{2k}(z-x_i)=-\sum_{i=0}^{2k}a_iz^i
\end{equation}
where of course $a_{2k}=1$.\\
Let us denote by $f$ the restriction of $F$ to the {\em real line}. We immediately get that $f(x_1)=..=f(x_{2k})=0$ and that $f$ has $k$ maximum points. 
Let us consider the function $v:\R^2\to\R$ defined as
\begin{equation}\label{eq:v-re}
v(x,y)=Re\Big(F(z)\Big)
\end{equation}
which is harmonic  in $\R^2$ and satisfies $v(x,0)=f(x)$. By construction
\begin{equation}\label{eq:v-pj}
v(x,y)=-\sum _{j=0}^{2k} a_jP_j(x,y)\end{equation}
with $a_{2k}=1$ and where $P_j$ are homogeneous harmonic polynomials of degree $j$.  Finally we introduce the function \begin{equation}\label{eq:u-epsilon}
\boxed{u_\e(x,y)=\frac 12-\frac12y^2+\e(y^3-3x^2y)+\e^\frac 32v(x,y)}
\end{equation}
which satisfies
\[-\Delta u_\e=1\ \ \text{ in }\R^2.\]
The function $v$ coincides with $f(x)$ along the $x$-axis, while $u_\e(x,0)=\frac 12-f(x)$. We end this section with a brief comment on the term $\e(y^3-3x^2y)$: it appears in the definition of $u_\e$ to have that the curvature of our domain vanishes exactly at two points. It
breaks the symmetry of the domain with respect to $y$, otherwise we would have that the curvature vanishes at $four$ points.

\section{Proof of Theorem \ref{i0}}
In this section we show that the function $u_\e$ in \eqref{eq:u-epsilon} verifies the claim of Theorem \ref{i0}. In the rest of the paper we let $o(1)$ be  a quantity that goes to zero as $\e$ goes to zero and $k\geq 2$.
\begin{theorem}\label{b2}
For $\e$  small enough the function $u_\e(x,y)$ in \eqref{eq:u-epsilon} 
admits a connected component (that we call $\Omega_\e$) of the superlevel set
\[
\{(x,y)\in\R^2\hbox{ such that }u_\e(x,y)>0\}
\]
which satisfies:\\
i) $\Omega_{\e}$ is a smooth bounded domain;\\
ii) $\Omega_{\e}$ is starshaped with respect one of its points;\\
iii) $\Omega_{\e}$ contains $k$ disjoint connected components $Z_{1,\e},..,Z_{k,\e}$ of the superlevel set 
$\{(x,y)\in\R^2\hbox{ such that } u_\e(x,y)>\frac 12\}$.
\end{theorem}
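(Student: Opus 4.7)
Since $u_\e\to\phi(y):=\frac12-\frac12 y^2$ in $C^2_{\mathrm{loc}}(\R^2)$, on any compact $K$ the set $\{u_\e>0\}\cap K$ is close to $S\cap K$; I define $\Omega_\e$ as the connected component of $\{u_\e>0\}$ containing the origin. Its boundary will consist of two natural pieces: long \emph{top/bottom} arcs where $|y|\approx 1$ and two \emph{lateral caps} near $x=\pm c_\e$ with $c_\e\sim\e^{-3/(4k)}$ (the scale at which $\e^{3/2}|v|\sim 1$, obtained from $f(x)\sim -x^{2k}$ at infinity and the equation $\frac12+\e^{3/2}f(x)=0$). On the top/bottom, $\partial_y u_\e=-y+\e(3y^2-3x^2)+\e^{3/2}v_y$ is nonzero because $|y|\approx 1$ dominates all the $\e$-corrections uniformly on $|x|\le c_\e$, so the implicit function theorem gives a smooth arc $y=\pm(1+o(1))$. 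On each cap, after rescaling $X=\e^{3/(4k)}x$ the leading balance reads $\tfrac12(1-y^2)=\mathrm{Re}(Z^{2k})+o(1)$ with $Z\approx X$, and the cap boundary is a small smooth closed curve around $(\pm 2^{-1/(2k)},0)$ in rescaled coordinates. A polar analysis in $(r,\theta)$, splitting into the sector $|\sin\theta|\ge c$ (where $-\tfrac12 r^2\sin^2\theta$ dominates once $r$ is moderately large) and the wedges $|\theta|,|\theta-\pi|\lesssim\e^{3/(4k)}$ (where $-\e^{3/2}r^{2k}\cos 2k\theta$ dominates for $r\gtrsim c_\e$), shows that the patched arcs form a smooth closed Jordan curve and that no competing positive region lies inside it.

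\textbf{Starshapedness (ii).} I compute
\[
(x,y)\cdot\nabla u_\e = -y^2 + 3\e(y^3-3x^2y) + \e^{3/2}(xv_x+yv_y),
\]
and evaluate it on $\partial\Omega_\e$. On the top/bottom part the dominant contribution is $-y^2\approx -1$; the $\e$-terms are subdominant there since $|x|\le c_\e$ and $\e x^2\to 0$. On the caps one uses $v_y(x,0)=0$ and $v_x(x,0)=f'(x)$ with $xf'(x)\sim -2kx^{2k}$, so at $|x|\sim c_\e$ the term $\e^{3/2}xv_x\sim -2k\e^{3/2}x^{2k}\sim -k$, again negative, and the $-y^2$ and cubic terms are lower order. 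Matching the two asymptotics across the "corner" region between arc and cap gives $(x,y)\cdot\nabla u_\e<0$ uniformly on $\partial\Omega_\e$, i.e., starshapedness with respect to the origin.

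\textbf{The $k$ components (iii).} In the inner variable $y=\e^{3/4}\tilde y$ with $x$ in a bounded neighborhood of $[x_1,x_{2k}]$, a straightforward Taylor expansion gives
\[
u_\e(x,\e^{3/4}\tilde y)-\tfrac12 = \e^{3/2}\Bigl(f(x)-\tfrac12\tilde y^2\Bigr) + O(\e^{7/4}),
\]
so to leading order $\{u_\e>\tfrac12\}$ in this strip equals $\{\tilde y^2<2f(x)\}$. Since $f$ vanishes exactly at $x_1<\cdots<x_{2k}$ and is positive precisely on the $k$ disjoint intervals $(x_{2j-1},x_{2j})$, the leading-order set is a disjoint union of $k$ smooth ovals; by stability under the $O(\e^{1/4})$ perturbation these persist as $k$ disjoint components $Z_{j,\e}$ of $\{u_\e>\tfrac12\}$ for $\e$ small. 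Each $Z_{j,\e}$ meets the $y=0$ axis inside $\Omega_\e$, hence is contained in $\Omega_\e$.

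\textbf{Expected obstacle.} The technically hardest step is (i), namely showing that the arcs and caps patch into a single smooth closed curve enclosing the origin. This requires careful bookkeeping of the dominant balance among the three terms $-\tfrac12 r^2\sin^2\theta$, $-\e r^3\sin 3\theta$ and $-\e^{3/2}r^{2k}\cos 2k\theta$ across the intermediate angular sectors, and the same matching underlies the uniform sign control needed in (ii). Once these sector estimates are in place, (iii) reduces to the routine Taylor expansion above.
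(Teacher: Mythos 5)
Your proposal has the right scalings and the right picture, but it punts on the step you yourself flag as hardest, and that is where the paper's argument is actually the simplest. For (i), instead of decomposing $\partial\Omega_\e$ into arcs and caps and patching them with a polar/sector analysis (which you sketch but do not carry out), the paper traps $\Omega_\e$ inside the explicit rectangle $R_\e=[-x_\e,x_\e]\times[-(1+h),1+h]$ with $x_\e=(3/\e^{3/2})^{1/(2k)}$: a direct evaluation shows $u_\e<-2$ on the lateral sides (the $\e^{3/2}x_\e^{2k}=3$ term dominates the constant $1/2$) and $u_\e<-\tfrac12 h^2$ on the top and bottom (the $-\tfrac12 h^2-h$ term dominates the $\e$-corrections, since $\e x_\e^2=O(\e^{(2k-3)/(2k)})\to0$). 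Since $u_\e(x_1,0)=1/2>0$, the component of $\{u_\e>0\}$ through $(x_1,0)$ is bounded by $R_\e$, and smoothness then comes for free from the starshapedness estimate because $(x-x_1,y)\cdot\nabla u_\e\neq 0$ on $\partial\Omega_\e$. Your gluing approach could be made to work, but it requires exactly the "careful bookkeeping" you acknowledge is missing, whereas the rectangle argument needs none.

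For (ii), you work with the origin rather than $(x_1,0)$ (both are admissible), but you miss the substitution that makes the estimate uniform and case-free: on $\partial\Omega_\e$ one has $u_\e=0$, hence $-y^2=-1-2\e(y^3-3x^2y)-2\e^{3/2}v$, and inserting this into $(x-x_1,y)\cdot\nabla u_\e$ collapses the leading term to $-1$, leaving only $\e(y^3-3x^2y+6xx_1y)=O(\e^{(2k-3)/(2k)})$ and $\e^{3/2}(-2v+(x-x_1)v_x+yv_y)$, whose supremum over $\R\times[-1-h,1+h]$ is finite by Euler's identity for the harmonic polynomials $P_j$ (the leading coefficient $(2-2k)<0$ forces $\to-\infty$ at large $|x|$). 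This is precisely what removes the "matching across the corner region" you worry about. For (iii), the rescaling $y=\e^{3/4}\tilde y$ and the expansion $u_\e-\tfrac12=\e^{3/2}(f(x)-\tfrac12\tilde y^2)+O(\e^{7/4})$ are correct, but "persistence under the $O(\e^{1/4})$ perturbation" is not automatic: you must still rule out connections outside the rescaled window (i.e., at $|y|$ not of order $\e^{3/4}$) and additional components elsewhere. The paper sidesteps this with a separation argument: at each interior minimum $s_j\in(x_{2j},x_{2j+1})$ of $f$ one shows directly that $u_\e(s_j,y)<\tfrac12$ for all $|y|<1+h$, so the $k-1$ vertical lines $x=s_j$ slice $R_\e$ into $k$ bands, each containing one of the points $(\bar s_j,0)$ with $u_\e(\bar s_j,0)>\tfrac12$; this yields the $k$ disjoint components with no persistence argument at all.
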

\begin{proof}
{\bf Step 1:} Let $x_\e=\left(\frac 3{\e^\frac 32}\right)^\frac 1{2k}$. We want to show that 
\[u(\pm x_\e,y)\leq -2 \ \ \text{ for } |y|<1+h\]
when $\e$ is small enough and $0<h<1$. In \eqref{eq:v-pj} 
let us consider the polynomial of degree $2k$, namely
\[P_{2k}(x,y)=\sum_{j=0}^k b_j x^{2k-2j}y^{2j}\]
for some suitable coefficients $b_j$ such that $b_0=b_{k}=1$. Then
\[\e^\frac 32 P_{2k}(x_\e,y)=\e^\frac 32 \sum_{j=0}^k b_j   \left(\frac 3{\e^\frac 32}\right)^\frac {2k-2j}{2k} y^{2j}
=3+o(1)\quad\hbox{as }\e\to0\]
uniformly with respect to $-1-h<y<1+h$.\\
In a very similar manner, for any $0\le j\le2k-1$ we have that
\[\e^\frac 32 P_j(x_\e,y)=o(1)\]
and
 \[\left| \e (y^3-x_\e^2y)\right|=O\left(\e^\frac {2k-3}{2k}\right)\]
for $\e\to 0$ uniformly with respect to $-1-h<y<1+h$. 
Considering all these estimates we obtain
 \[\left| u_\e(x_\e,y)+\frac 52+\frac 12 y^2\right|=o(1)\quad\hbox{as }\e\to0.
\]
The very same computation shows also that 
\[ u_\e(-x_\e,y)\leq -2
\]
when $\e$ is small enough and concludes the proof.

\

\noindent {\bf Step 2:} We show that $u_\e(x,y)< 0$ on the segments 
$$T_{\pm h}=\{(x,y)\in\R^2\hbox{ such that } y=\pm(1+ h), x\in[-x_\e,x_\e]\}$$
 for some $0<h<1$ when $\e$ is small enough.\\
First let us observe that for 
$(x,y)\in T_{\pm h}$,
\[\left|\e(y^3-3x^2y)\right|\leq \e (8+6x_\e^2)\leq 8\e+12\e^\frac{2k-3}{2k}=O\left(\e^\frac{2k-3}{2k}\right)\]
when $\e\to 0$.
Next, note that, by \eqref{eq:v-pj}
\[v(x,\pm(1+h))=-\sum_{j=1}^{2k}a_jP_j(x,\pm( 1+h))\]
and since  $a_{2k}=1$ we get that
\[\sup_{x\in \R}v(x,\pm(1+h))=C\in\R.\]
Then we  obtain
\[u_\e(x,\pm(1+h))=-\frac 12h^2-h+\e ((\pm(1+h))^3\pm3x^2(1+h))+\e ^\frac 32 v(x,\pm(1+h))<-\frac 12 h^2<0\]
for $\e $ small enough.\\

\noindent {\bf Step 3:}
 We have proved that 
 for every $\e$ small enough $u_\e(x,y)<0$
  on the boundary of the rectangle $R_\e=\{(x,y)\in\R^2\hbox{ such that } -x_\e\le x\le x_\e, -(1+h)\le y\le1+h \}$. Since $u_\e(x_1,0)=\frac 12+\e v(x_1,0)=\frac 12+\e f(x_1)=\frac 12$
   this implies that there is a connected component of the superlevel set $u_\e(x,y)>0$
   , that we call $\Omega_{\e}$, which is contained in the interior of $R_\e$ and contains the point $(x_1,0)$.
Since $u_\e$ is continuous then $\Omega_{\e}$ is a connected
open set with non empty interior.\\
Furthermore when $\e$ satisfies
\begin{equation}\label{cond-ep-1}
\e<\left(\frac 1{2\sup_{x\in[x_1,x_{2k}] }(-f(x))}\right)^\frac 23
\end{equation}
then all the segment $[x_1,x_{2k}]\times \{0\}$ belongs to $\Omega_\e$.\\

\noindent {\bf Step 4:}
In this step we prove that when $\e$ is small enough $\Omega_{\e}$ is $smooth$ and $starshaped$ with respect to the point $(x_1,0)$, which is equivalent to show that 
$$(x-x_1,y)\cdot\nu(x,y)\le-\alpha<0\hbox{ for any }(x,y)\in\partial \Omega_{\e},$$
where $\nu(x,y)$ is the outer normal of $\partial \Omega_\e$ at the point $(x,y)$. In particular we will show that
\begin{equation}\nonumber
(x-x_1)\frac{\partial u_\e}{\partial x}+y\frac{\partial u_\e}{\partial y}\le-\alpha\quad\forall (x,y)\in R_\e\hbox{ such that } u_\e(x,y)=0.
\end{equation}
It is easily seen that
\[(x-x_1)\frac{\partial u_\e}{\partial x}+y\frac{\partial u_\e}{\partial y}=-y^2+\e\left( -9x^2y+6xx_1y+3y^3\right) +\e^\frac 32 \left((x-x_1)\frac{\partial v}{\partial x}+y\frac{\partial v}{\partial y}\right).\]
On the other hand since $u_\e(x,y)=0$ on $\partial \Omega_{\e}$ we get that
\[-y^2=-1-2\e(y^3-3x^2y) -2\e^\frac 32v(x,y)\]
and 
\begin{equation}\label{eq:stell}\nonumber
(x-x_1)\frac{\partial u_\e}{\partial x}+y\frac{\partial u_\e}{\partial y}=-1+\e\left(y^3-3x^2y+6xx_1y \right) 
+\e^\frac 32 \left((x-x_1)\frac{\partial v}{\partial x}+y\frac {\partial v}{\partial y}-2v(x,y)\right).
\end{equation}
By \eqref{eq:v-pj} and  Euler Theorem we get
\begin{equation}\nonumber
x\frac{\partial v}{\partial x}+y\frac{\partial v}{\partial y}=-\sum_{j=0}^{2k}a_j\left( 
x\frac{\partial P_j}{\partial x}+
y\frac{\partial P_j}{\partial y}\right)=- \sum_{j=1}^{2k} j a_j P_j(x,y) 
\end{equation}
and so, recalling that $a_{2k}=1$,
\[
-2v(x,y)+(x-x_1)\frac{\partial v}{\partial x}+y\frac {\partial v}{\partial y}=- \sum_{j=0}^{2k} (j-2) a_j P_j(x,y)+x_1\sum_{j=1}^{2k} a_j\frac{\partial P_j}{\partial x}\xrightarrow[|x|\to\infty]\ -\infty
\]
uniformly for $y\in[-1-h,1+h]$. Hence
\[\sup_{(x,y)\in (-\infty,\infty)\times [-1-h,1+h]}\left(-2v(x,y)+(x-x_1)\frac{\partial v}{\partial x}+y\frac {\partial v}{\partial y}\right)=d<\infty.\]
In addition
\[\sup_{(x,y)\in [-x_\e,x_\e]\times [-1-h,1+h]} \left(y^3-3x^2y+6xx_1y\right)\le Cx_\e^2=O\left(\e^{-\frac 3{2k}}\right)\]
as $\e\to 0$, so that 
\[\e \left(y^3-3x^2y+6xx_1y\right)2=O\left(\e^{\frac {2k-3}{2k}}\right)\]
in the rectangle $R_\e$.
Summarizing again we have that
\[\sup_{\partial \Omega_\e\subset R_\e}\left(    (x-x_1)\frac{\partial u_\e}{\partial x}+y\frac{\partial u_\e}{\partial y}\right)\le -1+o(1)<-\frac12\]
for $\e\to 0$ which gives the claim.\\
Of course $(x-x_1)\frac{\partial u}{\partial x}+y\frac{\partial u}{\partial y}\neq 0$ on $\partial \Omega_{\e}$ implies that $\partial \Omega_{\e}$ is a smooth curve. \\

\noindent {\bf Step 5:} Here we prove that the superlevel set
 \[L_{\e}:=
\{(x,y)\in \R^2 \hbox{ such that }
u_\e(x,y)> \frac 12
\}
\]
admits in $\Omega_\e$ at least $k$ disjoint components $Z_{1,\e},\dots, Z_{k,\e}$.\\ 
Since $f(x_j)=0$ for $j=1,\dots,2k$ and $f(x)\to -\infty$ as $|x|\to \infty$, there exist points $s_j\in (x_{2j},x_{2j+1})$ for $j=1,\dots,k-1$ and points $\bar s_j\in (x_{2j+1},x_{2j+2})$ for $j=0,\dots,k$ such that
\begin{align*}
f(s_j)=\min_{x\in [ x_{2j},x_{2j+1}]}f(x)<0 & \text{ for }j=1,\dots,k-1\\
f(\bar s_j)=\max_{x\in [ x_{2j+1},x_{2j+2}]}f(x)>0 & \text{ for }j=0,\dots,k-1
\end{align*}
First observe that 
 \[u_\e(\bar s_j,0)=\frac 12 +\e^\frac 32 v(\bar s_j,0)=\frac 12 +\e^\frac 32 f(\bar s_j)>\frac 12\]
for $j=0,\dots,k$ so that the points $(\bar s_j,0)$ are contained in $L_{\e}$ for  every $\e$.
Next we want to prove that
\begin{equation}\label{eq:u-delta}
u_\e(s_j,y)<\frac 12
\end{equation}
for $j=1,\dots,k-1$ and $-1-h<y<1+h$.\\
In this way since  $\bar s_j<x_{2j}<s_{j+1}$ and the segment $[x_1,x_{2k}]\times \{0\}$ is contained in $\Omega_\e$ by Step 3, we also obtain that the superlevel set $L_{\e}$ admits at least $k$ disjoint components.\\
To prove \eqref{eq:u-delta} we argue by contradiction and assume that there exists 
a sequence $\e_n\to 0$ and points $y_n\in [-1-h,1+h]$ such that 
\begin{equation}\label{eq:passaggio}
u_{\e_n}(s_j,y_n)=\frac 12-\frac 12 y_n^2+\e_n(y_n^3-3s_j^2y_n)+\e_n^\frac 32 v(s_j,y_n)\ge\frac 12
\end{equation}
for $n\to \infty$, for a fixed value of $j$. Formula \eqref{eq:passaggio} easily implies that $y_n\to 0$ as $n\to \infty$ since $(y_n^3-3s_j^2y_n)$ and $v(s_j,y_n)$ are uniformly bounded and $\e_n\to 0$. Next we observe that since $v(s_j,0)=f(s_j)<0$
 then $v(s_j,y_n)\le \frac {f(s_j)} 2<0$ for $n$ large enough. Moreover,  using that $-\frac 12y_n^2+\e_n y_n^3<-\frac 14y_n^2$ for $n$ large enough we have
\[\begin{split}
u_{\e_n}(s_j,y_n)&=\frac 12-\frac 12 y_n^2+\e_n(y_n^3-3s_j^2y_n)+\e_n^\frac 32 v(s_j,y_n)\\
&\le \frac 12-\frac 14y_n^2-3\e_n s_j^2y_n+\e_n^\frac 32 \frac {f(s_j)}2 <\frac 12
\end{split}
\]
since $\e_n^\frac 32\left( \frac {f(s_j)}2+9\e_n^\frac 12s_j^4\right)<0$
 for $n$ large enough. This contradiction ends the proof.
\end{proof}

 Next aim is to derive additional information about the shape of $\Omega_\e$, in particular regarding  the oriented curvature of  $\partial \Omega_\e$. Since $\partial \Omega_\e$ is a level curve of  $u_\e(x,y)$ then its  oriented curvature at the point $(x,y)$ is given by

\begin{equation} \label{eq:curv}
Curv_{\partial \Omega_\e}(x,y)=-\frac{(u_\e)_{xx}(u_\e)_y^2-2(u_\e)_{xy}(u_\e)_x(u_\e)_y+(u_\e)_{yy}(u_\e)_x^2}{\left((u_\e)_x^2+(u_\e)_y^2\right)^\frac32}
\end{equation}
In particular, we want to prove
the following result

 \begin{lemma}\label{lem:curv-2}
The oriented curvature of  $\partial \Omega_\e$  vanishes exactly at two points when $\e$ is small enough.
\end{lemma}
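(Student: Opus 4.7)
My plan is to study the zeros of the numerator
$$N(x,y) = (u_\e)_{xx}(u_\e)_y^2 - 2(u_\e)_{xy}(u_\e)_x(u_\e)_y + (u_\e)_{yy}(u_\e)_x^2$$
of the curvature formula \eqref{eq:curv}, since by Step 4 of Theorem \ref{b2} $|\nabla u_\e|>0$ on $\partial\Omega_\e$, so the sign of $Curv_{\partial\Omega_\e}$ is opposite to that of $N$. First I would split $\partial\Omega_\e$ into an upper arc $\Gamma_+=\partial\Omega_\e\cap\{y>0\}$, a lower arc $\Gamma_-=\partial\Omega_\e\cap\{y<0\}$, and the two cap regions near $x=\pm x^*(\e)$ where $(u_\e)_y$ vanishes on the boundary. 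Outside the caps, the implicit function theorem produces a parameterization $y=y_\pm(x)$ with $y_\pm(x)=\pm 1+\e(1-3x^2)+O(\e^{3/2})$.

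Using $\Delta u_\e=-1$ to eliminate $(u_\e)_{xx}$ via the identity
$$-N=(u_\e)_y^2+\bigl((u_\e)_y^2-(u_\e)_x^2\bigr)(u_\e)_{yy}+2(u_\e)_x(u_\e)_y(u_\e)_{xy},$$
a direct $\e$-expansion along $y=y_\pm(x)$ yields, uniformly in $x$ on the domain of definition of $y_\pm$,
$$-N\big|_{\Gamma_\pm}(x)=\pm 6\e+\e^{3/2}v_{yy}(x,\pm 1)+O\!\bigl(\e^2(1+x^2)\bigr).$$
The crucial point is that the leading $\pm 6\e$ arises entirely from the antisymmetric perturbation $\e(y^3-3x^2 y)$ and therefore has opposite signs on the two arcs, while $v_{yy}(x,\pm 1)$ is a polynomial in $x$ of even degree $2k-2$ with positive leading coefficient $2k(2k-1)$ (since the top-degree term of $v$ is $-\mathrm{Re}(z^{2k})$), hence tends to $+\infty$ as $|x|\to\infty$.

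On $\Gamma_+$ the argument is immediate: $v_{yy}(\cdot,1)$ is bounded below on $\R$ by some constant $-K$, so $-N|_{\Gamma_+}\ge 6\e-K\e^{3/2}-o(\e)>0$ for $\e$ small, and curvature is strictly positive on the whole upper arc. On $\Gamma_-$ the leading term $-6\e$ makes curvature negative in the bulk, and a zero corresponds at leading order to a solution of $v_{yy}(x,-1)=6\e^{-1/2}(1+o(1))$. Since $v_{yy}(\cdot,-1)$ is a polynomial of even degree with positive leading coefficient, any threshold larger than all of its local maxima is attained at exactly two points (its two outermost preimages), and $6\e^{-1/2}\to\infty$ as $\e\to 0$ certainly fulfils this for $\e$ small. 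A Rouch\'e-type/implicit-function argument --- justified because at the outermost roots the derivative of $v_{yy}(\cdot,-1)$ is of order $|x|^{2k-3}$, much larger than the $O(\e^{1/2}(1+x^2))$ remainder --- then promotes this leading-order count to exactly two zeros of $-N$ on $\Gamma_-$.

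Finally, in the two cap regions the parameterization $y=y_\pm(x)$ breaks down; rescaling $X=\e^{3/(4k)}x$ one sees that $\partial\Omega_\e$ converges in $C^2$ to the strictly convex cap $y^2=1-2X^{2k}$, whose curvature at the tip is an explicit positive constant. A continuity/perturbation argument then gives $Curv>0$ throughout each cap for $\e$ small, so no further zeros arise and the total count is exactly two. The main technical obstacle I anticipate is the uniform control of the remainder in the expansion of $-N$ over the long range $|x|\le x_\e=O(\e^{-3/(4k)})$, since $|x|$ is unbounded as $\e\to 0$; this requires exploiting the explicit polynomial form of $v$ and a matched-asymptotics step that links the bulk parameterization on $\Gamma_\pm$ to the rescaled cap.
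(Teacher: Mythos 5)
Your proposal is correct and takes a route that is related to, but organized quite differently from, the paper's. The paper argues \emph{pointwise by contradiction}: it assumes $Curv_{\partial\Omega_\e}(\zeta_\e,\eta_\e)=0$ and, through Lemma \ref{lem:behavior} and a case analysis on $\lim\eta_\e$, forces $|\zeta_\e|\to\infty$, rules out $\eta_0\ne\pm1$ (strictly positive curvature in the cap regime) and $\eta_0=1$, and pins down $\zeta_\e\sim\zeta_\e^\pm$; uniqueness of the boundary point over each $\zeta_\e^\pm$ is then supplied by the implicit function theorem in $y$. You instead \emph{parameterize} $\partial\Omega_\e$ by $y=y_\pm(x)$, push the expansion of the numerator through the parameterization, and count zeros of the explicit leading-order function $\pm6\e+\e^{3/2}v_{yy}(x,\pm1)$ globally on each arc. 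Two features of your write-up are genuine improvements in clarity: (i) eliminating $(u_\e)_{xx}$ via $\Delta u_\e=-1$ condenses the three terms $N_{1,\e},N_{2,\e},N_{3,\e}$ of the paper into a single compact identity for $-N$, making the cancellation that produces $\pm6\e$ transparent; (ii) your sign argument on $\Gamma_-$ (leading term $-6\e$ in the bulk, eventually overpowered by $\e^{3/2}v_{yy}\to+\infty$) directly gives \emph{existence} of the two zeros as well as their count, whereas the paper's contradiction argument only establishes ``at most two'' and relies, somewhat implicitly, on the sign change of the curvature (shown later in the proof of $(P3)$) to get ``at least two''. The technical concern you flag at the end --- uniform control of the remainder over $|x|\lesssim x_\e=O(\e^{-3/(4k)})$, and the matching between the bulk parameterization and the rescaled cap --- is real, and is precisely what the paper's Lemma \ref{lem:behavior} and its case-$\eta_0\ne\pm1$ computation are doing for you; you would need to carry a slightly finer error bound, e.g.\ $O(\e^2(1+x^2)+\e^3x^{4k-2})$, to make the bulk expansion rigorous all the way to the overlap region, and you should note that the cap comparison is sign-valid because the sign of the curvature numerator is invariant under the anisotropic scaling $x\mapsto\e^{3/(4k)}x$. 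With those points made explicit, your argument closes.
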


Let us start examining the behavior of  some points $(\zeta_\e,\eta_\e)\in \partial \Omega_\e$ when $\e$ goes to zero.
\begin{lemma}\label{lem:behavior}
Let $(\zeta_\e,\eta_\e)$ be a point on $\partial \Omega_\e$. Then if $|\zeta_\e|\to \infty$ we have
\begin{equation}\label{e2}
|\zeta_\e|= \left(\frac 12 (1-\eta_\e^2)\right)^{\frac 1{2k}} \e^{-\frac 3{4k}}(1+o(1)).
\end{equation}
\end{lemma}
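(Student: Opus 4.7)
The plan is to read off \eqref{e2} directly from the boundary equation $u_\e(\zeta_\e,\eta_\e)=0$, once we identify which term dominates as $|\zeta_\e|\to\infty$. Substituting \eqref{eq:u-epsilon} and the representation \eqref{eq:v-pj} of $v$, and using $a_{2k}=1$, the equation rewrites as
\[
\e^{3/2}P_{2k}(\zeta_\e,\eta_\e)=\frac12(1-\eta_\e^2)+\e(\eta_\e^3-3\zeta_\e^2\eta_\e)-\e^{3/2}\sum_{j=0}^{2k-1}a_j P_j(\zeta_\e,\eta_\e).
\]
Since $P_{2k}(x,y)=x^{2k}+\sum_{j=1}^{k}b_j x^{2k-2j}y^{2j}$ and $|\eta_\e|<1+h$ (because $\partial\Omega_\e$ sits inside the rectangle $R_\e$ built in Steps 1--2 of the proof of Theorem \ref{b2}), one gets
\[
P_{2k}(\zeta_\e,\eta_\e)=\zeta_\e^{2k}\bigl(1+O(\zeta_\e^{-2})\bigr)=\zeta_\e^{2k}(1+o(1))\qquad\text{as }|\zeta_\e|\to\infty,
\]
which isolates the leading term on the left-hand side.

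Next I would show that every term on the right-hand side other than $\tfrac12(1-\eta_\e^2)$ is $o(1)$, using the a priori bound $|\zeta_\e|\le x_\e=(3/\e^{3/2})^{1/(2k)}$ provided by $\Omega_\e\subset R_\e$. Indeed
\[
\e\,\zeta_\e^2\le 3^{1/k}\e^{\,1-3/(2k)}=o(1)\qquad(k\ge 2),
\]
so the whole cubic perturbation $\e(\eta_\e^3-3\zeta_\e^2\eta_\e)$ is $o(1)$. Likewise, by homogeneity, $|P_j(\zeta_\e,\eta_\e)|\le C(1+|\zeta_\e|^j)$ for $j\le 2k-1$ and $|\eta_\e|\le 1+h$, whence
\[
\e^{3/2}|P_j(\zeta_\e,\eta_\e)|\le C\,\e^{\,3(2k-j)/(4k)}=o(1).
\]
Plugging these estimates into the rearranged equation, the dominant balance reads
\[
\e^{3/2}\zeta_\e^{2k}(1+o(1))=\tfrac12(1-\eta_\e^2)+o(1),
\]
which is equivalent to $|\zeta_\e|^{2k}=\tfrac12(1-\eta_\e^2)\,\e^{-3/2}(1+o(1))$ and hence to \eqref{e2}.

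The only delicate point is checking that the assumption $k\ge 2$ is precisely what forces the cubic correction $3\e\zeta_\e^2\eta_\e$ to be subdominant with respect to the leading term $\e^{3/2}\zeta_\e^{2k}$: if $k=1$ these two quantities would be of comparable size and the two-term balance would have to be replaced by a three-term one. Everything else is routine bookkeeping of powers of $\e$ based on the containment $\Omega_\e\subset R_\e$, which was already established in the proof of Theorem \ref{b2}.
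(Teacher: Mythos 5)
Your proof is correct and follows essentially the same route as the paper's: rearrange the boundary equation $u_\e(\zeta_\e,\eta_\e)=0$, use the a priori containment $\overline{\Omega_\e}\subset R_\e$ (so $|\zeta_\e|\le 3^{1/(2k)}\e^{-3/(4k)}$) to show that the cubic perturbation and the lower-degree harmonic polynomials contribute only $o(1)$, and read off the dominant balance $\e^{3/2}\zeta_\e^{2k}(1+o(1))=\tfrac12(1-\eta_\e^2)+o(1)$. Your version is slightly more explicit than the paper's in that you isolate $P_{2k}$ rather than $v$ as a whole and keep track of the precise power $\e^{3(2k-j)/(4k)}$ for each lower-order term, and you also flag why $k\ge2$ is needed; neither difference is substantive.
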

\begin{proof}
First 
$(\zeta_\e,\eta_\e)\in \partial \Omega_\e$ implies that
\begin{equation}\label{e3}
\e (\eta_\e^3-3\zeta_\e^2\eta_\e)+\e^\frac 32 v(\zeta_\e,\eta_\e)=\frac 12 (\eta_\e^2-1).
\end{equation}
Next we observe that, since $\bar \Omega_\e\subset R_\e$, where $R_\e$ is the rectangle introduced in Step 3 in the proof of Theorem \ref{b2}, then $|\zeta_\e|<3^\frac 1{2k}\e^{-\frac 3{4k}}$ and this implies that
\begin{equation}
\label{e4}
\e(\eta_\e^3-3\zeta_\e^2\eta_\e)=O\left(\e^\frac{2k-3}{2k}\right)
\end{equation}
for $\e\to 0$ and \eqref{e3} becomes, since $|\zeta_\e|\to \infty$
\[\e^\frac 32 v(\zeta_\e,\eta_\e)=\frac 12 (\eta_\e^2-1)+O\left(\e^\frac{2k-3}{2k}\right).\]
Finally by \eqref{eq:v-pj}, when $|\zeta_\e|\to \infty$ 
\[v(\zeta_\e,\eta_\e)=-\zeta_\e^{2k}(1+o(1))\]
which jointly with the previous estimate gives
\begin{equation}\label{eq:exp-x}
 \e^\frac 32 \zeta_\e^{2k}=\frac 12 (1-\eta_\e^2)(1+o(1))
 \end{equation}
when $\e\to 0$, from which \eqref{e2} follows.
\end{proof}
\vskip0.3cm
\begin{proof}[Proof of Lemma \ref{lem:curv-2}]
Here let us denote by $(\zeta_\e,\eta_\e)\in\partial\Om_\e$ a point such that $Curv_{\partial\Om_\e}(\zeta_\e,\eta_\e)=0$. By \eqref{eq:curv}
\begin{equation}\label{e9}\nonumber
Curv_{\partial\Om_\e}(\zeta_\e,\eta_\e)=-\frac {N_{1,\e}+N_{2,\e}+N_{3,\e}}{D_\e^\frac32}
\end{equation}
with
\begin{align*}
N_{1,\e}&=(-6\e \eta_\e+\e^\frac32v_{xx}(\zeta_\e,\eta_\e))
\left(-\eta_\e+3\e(\eta_\e^2-\zeta_\e^2)+\e^\frac32v_y(\zeta_\e,\eta_\e)\right)^2\\
N_{2,\e}&=-2(- 6\e \zeta_\e+\e^\frac32v_{xy}(\zeta_\e,\eta_\e))\Big(-6\e \zeta_\e \eta_\e+\e^\frac32v_x(\zeta_\e,\eta_\e)\Big)\\
&(-\eta_\e+3\e(\eta_\e^2-\zeta_\e^2)+\e^ \frac32v_y(\zeta_\e,\eta_\e))\\
N_{3,\e}&=(-1+6 \e \eta_\e+\e^\frac32v_{yy}(\zeta_\e,\eta_\e))\Big(-6\e \zeta_\e\eta_\e+\e^\frac32v_x(\zeta_\e,\eta_\e)\Big)^2\\
D_\e&= \big(-6\e\zeta_\e \eta_\e+\e^\frac32v_x(\zeta_\e,\eta_\e)\big)^2+\big(-\eta_\e+3\e(\zeta_\e^2-\eta_\e^2)+\e^\frac32v_y(\zeta_\e,\eta_\e)\big)^2.
\end{align*}
We divide the proof in some steps.
\vskip0.2cm
{\bf  Step 1: $|\zeta_\e|\to+\infty$}
\vskip0.1cm
We reason by contradiction. If the claim does not hold we can take sequences $\e_n$, $\zeta_n$, $\eta_n$ such that $\e_n\to 0$, $\zeta_n\to \zeta_0$, $\eta_n\to \eta_0$ (since $|\eta_\e|<2$ by definition of $R_\e$) and such that 
$Curv_{\partial\Om_{\e_n}}(\zeta_n,\eta_n)=0$. Since $(\zeta_n,\eta_n)\in \partial \Omega_n$ then \eqref{e3} holds and, passing to the limit, we have that  $\eta_n\to \pm1$ and 
\begin{equation}\label{eq:somma-N}
\begin{split}
&0=\frac{N_{1,\e_n}+N_{2,\e_n}+N_{3,\e_n}}\e_n=\\
&-6\big(\eta_n^3+o(1)\big)+72\e_n\big(\zeta_0^2\eta_n^2+o(1)\big)-36\e\big(\zeta_0^2\eta_n^2+o(1)\big)=-6\big(\pm1+o(1)\big)
\end{split}
\end{equation}
which gives a contradiction.
\vskip0.2cm
{\bf  Step 2}: We have that there exists $two$ values $\zeta_\e$ given by
$
\zeta_\e^\pm\sim\pm\left(\frac3{k(2k-1)\e^\frac12}\right)^\frac1{2k-2}
$
\vskip0.1cm
First, observe that, by \eqref{b1}, \eqref{eq:v-re}, \eqref{eq:v-pj}
and Step 1 we have that
\begin{align*}
&v_x=-2k\zeta_\e^{2k-1}(1+o(1)) & v_y=c_k \zeta_\e^{2k-2} \eta_\e(1+o(1))\\
& v_{xx}=-2k(2k-1) \zeta_\e^{2k-2}(1+o(1)) & v_{xy}=c'_k  \zeta_\e^{2k-3} \eta_\e(1+o(1)) \\
& v_{yy}=c_k  \zeta_\e^{2k-2}(1+o(1))
\end{align*}
where $c_k,c'_k\ne0$ are constants depending on $k$.
Using \eqref{e2} and $\e \zeta_\e^2\to 0$ as $\e \to 0$, we obtain (denoting again by $\eta_0=\lim \eta_\e$)
\[\begin{split}
&N_{1,\e}\sim \left(-6\e\eta_\e-2k(2k-1)\e^\frac 32 \zeta_\e^{2k-2}\right)\left(-\eta_\e-3\e \zeta_\e^2+c_k\e^\frac 32 \zeta_\e^{2k-2}\eta_\e
\right)^2(1+o(1))\\
&=\begin{cases}
\eta_0^2 \left( -6\e \eta_0-2k(2k-1)\e^\frac 32\zeta_\e^{2k-2}\right)(1+o(1)) & \text{ when }\eta_0\neq 0\\
o(\e+\e^\frac 32 \zeta_\e^{2k-2}) & \text{ when }\eta_0=0
\end{cases}
\end{split}\]
Using that $\e^\frac 32\zeta_\e^{2k}=O(1)$ (see \eqref{eq:exp-x}),
\[\begin{split}
&N_{2,\e}\sim -2 \left(-6\e \zeta_\e+c'_k \e^\frac 32 \zeta_\e^{2k-3}\eta_\e\right)\left(-6\e\zeta_\e\eta_\e-2k \e^\frac 32 \zeta_\e^{2k-1}\right)\cdot\\
&\left( -\eta_\e -3\e \zeta_\e^2+c_k\e^\frac 32 \zeta_\e^{2k-2}\eta_\e\right)(1+o(1))\\
&=\begin{cases}
2\eta_0^2 \left( 12k \e^{1+\frac 32} \zeta_\e^{2k}-2k c'_k\e^3 \zeta_\e^{4k-4}\eta_\e\right) (1+o(1)) & \text{ when }\eta_0\neq 0\\
o(\e+\e^3 \zeta_\e^{4k-4}) & \text{ when }\eta_0=0
\end{cases}
\end{split}\]
\[\begin{split}
&N_{3,\e}\sim \left( -1+6\e \eta_\e+c_k\e^\frac 32\zeta_\e^{2k-2}\right)\left( -6\e \zeta_\e\eta_\e-2k\e^\frac 32 \zeta_\e^{2k-1}\right)^2(1+o(1))\\
&=-1\left(4k^2\e^3\zeta_\e^{4k-2}+12 \e^{1+\frac 32}\zeta_\e^{2k}\eta_\e^2\right)(1+o(1)).
\end{split}\]
Hence if $\boxed{\lim\limits_{\e\to0}\eta_\e=\eta_0\neq \pm1}$, then $\e^\frac 32 \zeta_\e^{2k-2}\sim \e^{\frac 3{2k}}\left( \frac 12 (1-\eta_0^2)\right)^{\frac {k-1}k}$ and 
\[\begin{split}
N_{1,\e}+N_{2,\e}+N_{3,\e}
&=\e^{\frac 3{2k}}\left( -\eta_0^2 2k(2k-1) \left( \frac 12 (1-\eta_0^2)\right)^{\frac {k-1}k}\right.\\
&\left. -4k^2\left( \frac 12 (1-\eta_0^2)\right)^{\frac {2k-1}k}\right)(1+o(1))<0
\end{split}\]
showing that the curvature is {\em strictly positive} in this case.\\
So we necessarily have that  $\boxed{\lim\limits_{\e\to0} \eta_\e=\pm 1}$ and by \eqref{e2} we have that $\e^\frac 32 \zeta_\e^{2k}=o(1)$. This implies that
\[
N_{1,\e}=\left( \mp 6\e-2k(2k-1)\e^\frac 32 \zeta_\e^{2k-2}\right)(1+o(1));\]
\[
N_{2,\e}=O\left( \e^2\zeta_\e^2+\e^{1+\frac 32}\zeta_\e^{2k}+\e^3\zeta_\e^{4k-4}\right);\]
\[N_{3,\e}=O\left(  \e^3\zeta_\e^{4k-2} +\e^3\zeta_\e^{4k-4}\right)
\]
and, since $\e^2\zeta_\e^2,\e^{1+\frac 32}\zeta_\e^{2k}=o(\e)$ and $\e^3 \zeta_\e^{4k-4},\e^3\zeta_\e^{4k-2}
=o(\e^\frac 32\zeta_\e^{2k-2})$ then $N_{2,\e},N_{3,\e}
=o(N_{1,\e})$ and 
\begin{equation}\label{eq:curv-fin}
N_{1,\e}+N_{2,\e}+N_{3,\e}
=\left(\mp6\e-2k(2k-1)\e^\frac32\zeta_\e^{2k-2}\right)(1+o(1)).
\end{equation}
Since $Curv_{\partial \Omega_\e}(\zeta_\e,\eta_\e)=0$ we deduce
\begin{equation}\label{e10}
\left(\mp6\e-2k(2k-1)\e^\frac32\zeta_\e^{2k-2}\right)(1+o(1))=0
\end{equation}
as $\e\to 0$.
First we get that if $\eta_\e\to1$ \eqref{e10} does not have solutions. Hence $\eta_\e\to-1$ and \eqref{e10} becomes
\begin{equation}
\left(6-2k(2k-1)\e^\frac12\zeta_\e^{2k-2}\right)(1+o(1))=0
\end{equation}
that implies 
\begin{equation}
\e^\frac12\zeta_\e^{2k-2}=\frac3{k(2k-1)}(1+o(1)).
\end{equation}
Correspondingly we get $two$ solutions $\zeta_\e$ whose behavior is given by
\begin{equation}
\zeta_\e^\pm\sim\pm\left(\frac3{k(2k-1)\e^\frac12}\right)^\frac1{2k-2}.
\end{equation}
\vskip0.2cm
{\bf  Step 3: conclusion}
\vskip0.1cm
We end the proof showing that, corresponding to $\zeta_\e^+$ there exists only one $\eta_\e^+$ which verifies  $0=Curv_{\partial\Om_\e}(\zeta_\e,\eta_\e)=N_{1,\e}+N_{2,\e}+N_{3,\e}$ and the same is true for $\zeta_\e^-$. We apply the implicit function theorem to $u_\e$.
We have that  $u_\e(\zeta_\e^+,\eta_\e^+)=0$ and, recalling that $\eta_\e^+\to-1$,
\[
(u_\e)_y(\zeta_\e^+,\eta_\e^+)=-\eta_\e+3\e(\zeta_\e^2\e-\eta_\e^2)+\e^\frac32v_y=1+o(1)\]
for $\e\to 0$.
So by the implicit function theorem we deduce that the equation $u_\e(x,y)=0$ has only one solution for $x=\zeta^+_\e$ and $y$ close to $-1$ which ends the proof.
\end{proof}
\begin{proof}[Proof of Theorem \ref{i0}]
The existence of the family of solutions $u_{\e,k}$ to \eqref{eq:torsion} and of the domains $\Omega_{\e,k}$, as well as the properties $(P0)$ and $(P1)$ follow by Theorem \ref{b2}.\\
Property $(P2)$ is a consequence of the definition of $u_{\e,k}(x,y)$ and that locally $u_{\e,k}(x,y)\to \frac 12 (1-y^2)$ as $\e\to 0$.\\
Concerning $(P3)$, the curvature of $\partial\Omega_{\e,k}$ does change sign because, denoting by $q_\e=(0,\beta_\e)\in\partial\Omega_{\e,k}$ with $\beta_\e\to-1$, we have
$$Curv_{\partial\Om_{\e,k}}(q_\e)=\big(-6+o(1)\big)<0.$$
Next the fact that the curvature of $\partial \Omega_{\e,k}$ vanishes exactly at two points follows by Lemma \ref{lem:curv-2}. 

To prove that $\min\left(Curv_{\partial\Om_{\e,k}}\right)\to 0$ as $\e\to 0$ 
 we proceed as in the proof of Lemma \ref{lem:curv-2}. Denote by $(\tilde\zeta_\e,\tilde\eta_\e)\in\partial\Omega_\e$ a point which achieves the minimum of the curvature of $\partial\Omega_\e$ (recall that $|\tilde\eta_\e|\le C $). If, up to some subsequence, $\tilde\zeta_{\e_n}\to\tilde\zeta_0 $, since $\Omega_\e$ converges to a strip on compact set, the claim follows. On the other hand, if  $|\tilde\zeta_{\e_n}|\to+\infty$, repeating step by step the computation in Case $2$ of Lemma \ref{lem:curv-2} we again get $Curv_{\partial\Om_{\e,k}}(\tilde\zeta_{\e_n},\tilde\eta_{\e_n})\to0$. This ends the proof.
\end{proof}
\section{More general nonlinearities}
In this section we consider solutions to \eqref{f1} which satisfy \eqref{f2}. The existence is guaranteed for example if the assumptions in \cite{mp} are satisfied.  Next lemma studies the behavior as $\l\to0$.
\begin{lemma}\label{lem:convergence}
Let $u_\l$ be a family of solutions to \eqref{f1} satisfying  \eqref{f2}. Then we have that
\begin{equation}\label{f4}
\frac{u_\l}{\l f(0)}\to u_0\quad\hbox{as $\l\to0$ in }C^2(\Om)
\end{equation}
where $u_0$ is a solution to
\begin{equation}\label{f6}
\begin{cases}
-\Delta u=1&\hbox{in }\Om\\
u=0&\hbox{on }\partial\Om.
\end{cases}
\end{equation}
\end{lemma}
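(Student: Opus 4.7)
The plan is to show first that $u_\l \to 0$ uniformly on $\overline{\Om}$, then to rescale and exploit continuity of $f$ together with elliptic regularity to obtain $C^2$ convergence.

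First I would observe that, thanks to \eqref{f2}, the right-hand side $\l f(u_\l)$ in \eqref{f1} satisfies
$$\|\l f(u_\l)\|_{L^\infty(\Om)} \le \l \, \max_{[0,C]} |f| =: M\l.$$
By the maximum principle (comparing $u_\l$ with $M\l\,\varphi$, where $\varphi$ is the torsion function of $\Om$) we obtain $\|u_\l\|_\infty \le C'\l$. In particular $u_\l \to 0$ uniformly in $\overline{\Om}$.

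Next I would introduce the normalized sequence
$$v_\l := \frac{u_\l}{\l f(0)},$$
which solves
\begin{equation*}
\begin{cases}
-\Delta v_\l = \dfrac{f(u_\l)}{f(0)} & \text{in } \Om,\\
v_\l = 0 & \text{on } \partial\Om.
\end{cases}
\end{equation*}
Since $f \in C^1$ and $u_\l \to 0$ uniformly, the right-hand side converges to $1$ uniformly in $\overline{\Om}$ and is uniformly bounded in $L^\infty$ (and indeed uniformly Hölder continuous, because $f(u_\l)$ inherits Hölder regularity from $u_\l$ via the $C^1$ bound on $f$).

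Then I would apply standard Schauder estimates to $v_\l$. Because $\partial\Om$ is smooth and the right-hand side is uniformly bounded in $C^{0,\alpha}(\overline\Om)$ for some $\alpha \in (0,1)$, we get $\|v_\l\|_{C^{2,\alpha}(\overline\Om)} \le K$ independent of $\l$. By Arzelà–Ascoli, any sequence $\l_n \to 0$ admits a subsequence along which $v_{\l_n} \to w$ in $C^2(\overline\Om)$, and the limit $w$ satisfies $-\Delta w = 1$ in $\Om$, $w = 0$ on $\partial\Om$. By uniqueness of the solution to the torsion problem, $w = u_0$; since the limit does not depend on the subsequence, the full family converges, giving \eqref{f4}.

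The only delicate point is ensuring the $C^{2,\alpha}$ compactness up to the boundary, but this is a routine consequence of the smoothness of $\partial\Om$ and Schauder theory applied to a linear equation with uniformly Hölder data; no nontrivial obstacle arises.
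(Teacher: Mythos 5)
Your proof is correct and follows essentially the same route as the paper: first establish $\|u_\l\|_\infty = O(\l)$ (you use a barrier/maximum-principle argument with the torsion function, the paper uses the Green representation — these are interchangeable), then normalize by $\l f(0)$ and invoke elliptic regularity plus compactness to pass to the $C^2$ limit. Your write-up is a bit more explicit about the Schauder/Arzelà–Ascoli step and the uniqueness of the limit, but there is no substantive difference in strategy.
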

\begin{proof}
Let us show that 
\begin{equation}\label{f3}
|u_\l|\le C\l\quad\hbox{in }\Om
\end{equation}
where $C$ is a constant independent of $\l$. By the Green representation formula we have that
\[
|u_\l(x)|\le\l\int_\Om G(x,y)\left|f\big(u_\l(y)\big)\right|dy\le\l\max\limits_{s\in[0,C]}|f(s)|\int_\Om G(x,y)dy\le C\l
\]
where $C$ is independent of $\l$. Next by \eqref{f2}, \eqref{f3} and the standard regularity theory we derive that
\[
u_\l\to0\quad\hbox{in }C^2(\Om)
\]
as $\l\to 0$ so that $f(u_\l)\to f(0)$.
Finally the standard regularity theory, applied to $\frac{u_\l}{\l f(0)}$, and 
 \eqref{f1} gives the claim.
\end{proof}

Theorem \ref{i3} is a straightforward consequence of the previous lemma

\begin{proof}[Proof of Theorem \ref{i3}]
Assume $\e$ is small enough to satisfy the assumptions of Theorem \ref{i0}. By Lemma \ref{lem:convergence}  $\frac {u_\l}{\l f(0)}\to u_{\e,k}$  as $\l\to 0$. Then the claim follows by the $C^2$ convergence of $u_\l$ to $u_{\e,k}$ and the semi-stability of all solutions to \eqref{f6}.
\end{proof}

\bibliography{GladialiGrossiFinal.bib}
\bibliographystyle{abbrv}
\end{document}